\newcommand{\rank}{\operatorname{rank}}
\newcommand{\C}{\mathbf{C}}
\renewcommand{\L}{\mathbf{L}}
\renewcommand{\P}{\mathbf{P}}
\newcommand{\N}{\mathbf{N}}
\newcommand{\Q}{\mathbf{Q}}
\newcommand{\Z}{\mathbf{Z}}
\newcommand{\un}{\mathbf{1}}
\newcommand{\sM}{\mathcal{M}}
 \numberwithin{equation}{section}
\theoremstyle{plain}
\newtheorem{thm}[equation]{Theorem}
\newtheorem{prop}[equation]{Proposition}
\newtheorem{cor}[equation]{Corollary}
\newtheorem{conj}[equation]{Conjecture}
\theoremstyle{definition}
\newtheorem{defn}[equation]{Definition}
\newtheorem{rk}[equation]{Remark}
\begin{document}

\title{A family of special cubic fourfolds with motive of abelian type}

\author[H. Awada]{Hanine Awada}
\address{Institut Montpellierain Alexander Grothendieck \\ %
Universit\'e de Montpellier \\ %
Case Courrier 051 - Place Eug\`ene Bataillon \\ %
34095 Montpellier Cedex 5 \\ %
France}
\email{hanine.awada@umontpellier.fr}

\author[M. Bolognesi]{Michele Bolognesi}
\address{Institut Montpellierain Alexander Grothendieck \\ %
Universit\'e de Montpellier \\ %
Case Courrier 051 - Place Eug\`ene Bataillon \\ %
34095 Montpellier Cedex 5 \\ %
France}
\email{michele.bolognesi@umontpellier.fr}

\author[C. Pedrini]{Claudio Pedrini}
\address{Dipartimento di Matematica \\ %
Universit\'a degli Studi di Genova \\ %
Via Dodecaneso 35 \\ %
16146 Genova \\ %
Italy}
\email{pedrini@dima.unige.it}

\begin{abstract} 

In this short note, we show that there exist one dimensional families of cubic fourfolds with Chow motive of abelian type and finite dimensional inside every Hassett divisor of special cubic fourfolds. This also implies abelianity and finite dimensionality of the motive of related Hyperk\"ahler varieties, such as the Fano variety of lines and the LLSvS 8fold.

\end{abstract}

\maketitle 

\section{Introduction}

Let $\sM_{rat}(\C)$ be the (covariant) category of Chow motives and let $\sM^{Ab}_{rat}(\C)$ be the strictly full, thick, rigid, tensor subcategory of $\sM_{rat}(\C)$ generated by the motives of abelian varieties. The following classes of smooth projective varieties are known to have motives belonging to $\sM^{Ab}_{rat}(\C)$:

\begin{enumerate}

\item projective spaces, Grassmannian varieties, projective homogeneous
varieties, toric varieties ;
\item smooth projective curves ;
\item Kummer K3 surfaces ;
\item K3 surfaces with Picard numbers at least 19;
\item K3 surfaces  with a non-symplectic group of automorphisms  acting trivially on the algebraic cycles: K3 surfaces satisfying these conditions have 
Picard numbers equal to $ 2, 4, 6, 10, 12, 16, 18, 20$, see \cite{Ped};
\item Hilbert schemes of points on  abelian surfaces;
\item Fermat hypersurfaces ;
\item Cubic 3-folds and their Fano surfaces of lines, see \cite{GG} and  \cite{Diaz}.
\end{enumerate}

In this short note we consider the case of cubic fourfolds. It is well known that to certain classes of cubic fourfolds one can append an \it associated K3 surface \rm, whose primitive cohomology is essentially the same as the non-special cohomology of the cubic fourfold. Moreover the moduli space of cubic fourfolds contains a countable infinity of divisors that parametrize cubics whose lattice of algebraic 2-cycles has rank bigger than expected \cite{Ha1}. These divisors are commonly called Hassett divisors of special cubic fourfolds. 

\medskip

On the other hand, very few examples of cubic fourfolds belonging to the category $\sM^{Ab}_{rat}(\C)$ are known, and they are often related to the Fermat cubic fourfold \cite{lat:somecub,lat:afamily,lat:aremarkfano}. Moreover, it is not clear how the known examples are positioned in the geography of the moduli space. Note that all the examples of motives that have been proven to be finite-dimensional, in the sense of Kimura-O'Sullivan, belong to the category $\sM^{Ab}_{rat}$.

\medskip

In this note, we try to start to fill these gaps, by constructing new one dimensional families of cubic fourfolds with finite dimensional, Abelian motive. Moreover, these families are quite ubiquitous in the moduli space, since any Hassett divisor contains some of them.

The proof is basically the combination of three ingredients. The first is the intersection theoretical machinery developed by the first named author in \cite{awada2020rational}, that allowed her to construct one dimensional families of cubic fourfolds with associated K3 surfaces with rank 19. These families can be arbitrarly constructed inside any Hassett divisor. Then we combine this with results of the second and third named authors  \cite{BP20} and B\"ulles \cite{Bu}, about the Chow-K\"unneth decomposition of the Chow motive of a cubic fourfold $X$. These allow us to reduce the finite dimensionality and Abelianity of $h(X)$ to the same properties of the associated K3 surface. Finally we apply results of the third named author \cite{Ped} to obtain our main theorem

\begin{thm}\label{main}
Every Hassett divisor $\mathcal{C}_d$ contains a countable infinity of one dimensional families of cubic fourfolds, whose Chow motive is finite dimensional and Abelian.
\end{thm}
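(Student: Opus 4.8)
The plan is to reduce the finite-dimensionality and abelianity of the Chow motive $h(X)$ of a cubic fourfold $X$ in the families we construct to the same properties for an associated K3 surface $S$ of Picard number $19$, properties which are already available in the literature. The argument splits along the three ingredients announced above: a lattice-theoretic construction of the families, a motivic reduction to the associated K3, and the known abelianity of the motive of a K3 of high Picard number.

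First I would fix a Hassett divisor $\mathcal{C}_d$ and apply the intersection-theoretic construction of \cite{awada2020rational} to produce one-dimensional families of special cubic fourfolds inside it. The relevant numerology is that the moduli space of cubic fourfolds has dimension $20$, and imposing that the lattice of algebraic $2$-cycles $A(X)\subset H^4(X,\Z)$ have rank $r$ cuts out a locus of dimension $21-r$; hence a one-dimensional family corresponds to a generic member with $A(X)$ of rank $20$, whose orthogonal (non-special) lattice has rank $3$. Dually, the associated K3 surface $S$ then has transcendental lattice of rank $3$, that is, Picard number $19$. Since a rank-$2$ lattice of discriminant $d$ containing the square $h^2$ of the hyperplane class can be enlarged, in countably many inequivalent ways, to an admissible rank-$20$ lattice of the correct signature that still contains a rank-$2$ sublattice meeting Hassett's condition for the existence of an associated K3, one obtains a countable infinity of such families, each contained in $\mathcal{C}_d$.

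Next I would feed these cubic fourfolds into the motivic machinery of \cite{BP20} and \cite{Bu}. A cubic fourfold has vanishing odd cohomology and $b_2=1$, so its Chow--K\"unneth decomposition expresses $h(X)$ as a direct sum of Tate motives together with $h^4(X)$, which in turn splits as a sum of Lefschetz motives (the algebraic classes) and the transcendental motive $t(X)$. The content of \cite{BP20,Bu} is that, for a cubic fourfold admitting an associated K3 surface $S$, there is an isomorphism of Chow motives $t(X)\cong t_2(S)(-1)$. Consequently $h(X)$ is finite-dimensional, respectively lies in $\sM^{Ab}_{rat}(\C)$, if and only if $t_2(S)$ is, the remaining summands being Tate motives, which are manifestly finite-dimensional and abelian. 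Finally, the associated $S$ has Picard number $19$, so item (4) of the list above, proved in \cite{Ped}, applies: through its Shioda--Inose structure $t_2(S)$ is a direct summand of a Tate twist of the motive of an abelian surface, hence finite-dimensional and a member of $\sM^{Ab}_{rat}(\C)$. Using that $\sM^{Ab}_{rat}(\C)$ is closed under Tate twists and direct sums and that finite-dimensionality passes to direct summands, I conclude that $h(X)$ is finite-dimensional and Abelian for every $X$ in the families, proving the theorem.

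I expect the hard part to be the second stage. One must guarantee that the lattice-theoretic ``associated K3 surface'' furnished by the first stage is genuinely geometric and that the correspondence underlying \cite{BP20,Bu} applies to \emph{every} member of the one-dimensional family, not merely its very general point, so that $t(X)\cong t_2(S)(-1)$ holds uniformly along the family. The abelianity input is uniform, since each member has $A(X)$ of rank at least $20$ and hence associated K3 of Picard number at least $19$; the delicate point is rather that the admissible rank-$20$ lattices produced in the first stage must always contain a rank-$2$ sublattice satisfying Hassett's existence criterion, which is what secures a geometric $S$ to which the motivic reduction can be applied for \emph{every} $d$, including those $d$ for which the generic member of $\mathcal{C}_d$ carries no associated K3.
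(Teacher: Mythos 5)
Your proposal is correct and follows essentially the same route as the paper: one-dimensional families cut out inside $\mathcal{C}_d$ by intersecting Hassett divisors as in \cite{awada2020rational} so that every member has an associated K3 surface of Picard number $\geq 19$, the motivic reduction $t(X)\cong t_2(S)(1)$ of \cite{BP20, Bu}, and the finite dimensionality and abelianity of such K3 motives from \cite{Ped}. The one point to tighten is that B\"ulles' isomorphism is stated under the numerical condition $(***)$ on the discriminant of a labelling, not under bare existence of a Hodge-theoretically associated K3; the paper sidesteps this by forcing the families into $\mathcal{C}_{14}$ (and into $\mathcal{C}_{2n^2+2n+2}$ for varying $n$ to get the countable infinity), for which $(***)$ is checked directly, whereas your version implicitly needs the implication ``Hassett's condition $\Rightarrow (***)$'', which is true but should be verified.
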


\begin{rk}
We observe that our construction allows us to construct some families, with the properties as in Thm.\ref{main}, whose members are all rational. If Kuznetsov's conjecture \cite{Kuznetsov_2009} about the rationality of cubic fourfolds holds true, all the family we can construct would be made up of rational cubic fourfolds.
\end{rk}

\begin{rk}
It is not hard to show, and we do so in Sect. 4, that cubic fourfolds with finite dimensional motive are dense for the complex topology inside Hassett divisors.
\end{rk}

As a consequence of our main theorem we also observe the finite dimensionality and Abelianity of the motive of two related Hyperk\"ahler varieties, namely the Fano variety $F(X)$ of lines contained in a cubic fourfold $X$, and the Lehn-Lehn-Sorger-van Straten 8fold $L(X)$ constructed in \cite{LLSvS}. 

\subsection{Plan of the paper}

In Section 2 we recall some basic facts about the moduli of cubic fourfolds, and in Section 3 the main results from the theory of Chow motives that are needed. Section 4 contains the proof of the main Theorem and the results about Hyperk\"ahler varieties.

\section{The moduli space of cubic fourfolds}


\subsection{Special cubic fourfolds}

A cubic fourfold $X$ is a smooth complex cubic hypersurface in $\P^5$. The coarse moduli space of cubic fourfolds $\mathcal{C}$ is a 20-dimensional quasi-projective variety. It can be described as a GIT quotient $\mathcal{C}:= \mathcal{U} // PGL(6,\C)$, where $\mathcal{U}$ is the Zariski open subset of $\vert \mathcal{O}_{\P^5}(3) \vert$ parametrizing smooth cubic hypersurfaces in $\P^5$.

\medskip

The cohomology of cubic fourfolds is torsion-free and the middle cohomology $H^4(X,\Z)$ is the one containing nontrivial information (see \cite{Beau}) about the geometry of $X$. Hassett (\cite{Ha1}, \cite{Ha2}) studied cubic fourfolds via Hodge theory and introduced the notion of \textit{special} cubic fourfolds, corresponding to those containing an algebraic surface not homologous to a multiple of $h^2$, where $h$ is the class of a hyperplane section. Let $A(X)=H^4(X,\Z) \cap H^{2,2}(X)$ be the positive definite lattice of integral middle Hodge classes. Since the integral Hodge conjecture holds for cubic fourfolds (see \cite{Voi}), $X$ is \textit{special} if and only if the rank of $A(X)$ is at least 2. 

\begin{defn}
A labelling of a \textit{special} cubic fourfold is a rank 2 saturated sublattice $K \subseteq A(X)$ containing $h^2$. Its discriminant $d$ is the determinant of the intersection form on $K$.
\end{defn}

Hassett defined a countably infinite union of divisors $\mathcal{C}_d \subset \mathcal{C}$ parametrizing \textit{special} cubic fourfolds with labelling of discriminant $d$. He showed that $\mathcal{C}_d$ is irreducible and nonempty if and only if $d \geq 8$ and $d \equiv 0,2$ $[6]$. Moreover, Hassett described how, in certain cases, one can associate to a cubic fourfold a K3 surface. More precisely, there exists a polarized K3 surface $S$ of degree $d$ such that $K_d^{\perp} \subset H^4(X,\Z)$ is Hodge-isometric to $H^2_{prim}(S,\Z)(-1)$ if and only if $d$ is not divisible by 4, 9, or any odd prime number $p \equiv 2\ [3]$. It is now conjectured that the existence of such a K3 is in relation with rationality of the corresponding cubic fourfold. According to the conjecture, supported by Hassett and Kuznestov's work (\cite{Ha1}, \cite{Ha2}, \cite{Kuznetsov_2009}), a cubic fourfold is  rational if and only if it has an associated K3 surface.  Moreover, for infinitely many values of $d$, the Fano variety of lines on the cubic fourfold $F(X)$ is isomorphic to the Hilbert scheme of length two subschemes $S^{[2]}$ of the associated K3 surface $S$. This holds if $d=2(n^2+n+1)$ for an integer $n\geq2$.

\medskip

\subsection{The divisor $\mathcal{C}_{14}$}

In this note, we will need to consider a particular Hassett divisor, that is $\mathcal{C}_{14}$. This can be considered in three  different ways.\par
(i) as  the closure of the locus of Pfaffian cubic fourfolds, i.e. cubics defined by the Pfaffian of a $6\times 6$ alternating matrix of linear forms.\par
(ii) as the closure of the locus of cubics containing a del Pezzo quintic surface;\par
(iii) as the closure of the locus of cubics containing a quartic scroll.\par
\noindent  The three descriptions are strictly related (see \cite{Beau,MR3968870,Ha3}). All cubics in $\mathcal{C}_{14}$ have an associated genus 8 K3 surface, and they are all rational \cite{MR3968870}.

\medskip

\subsection{Intersections of Hassett divisors}

Using lattice theoretical computations, one can study the intersection of Hassett divisors $\mathcal{C}_d \subset \mathcal{C}$ (see \cite{MR3238111}, \cite{awada2019rational}, \cite{2019arXiv190501936Y}). 
In \cite[Theorem 3.1]{2019arXiv190501936Y} it is proved that any two Hassett divisors intersect i.e $\mathcal{C}_{d_1} \cap \mathcal{C}_{d_2} \ne \emptyset$ for any integers $d_1, d_2 \equiv 0,2$ $[6]$. In order to study further the intersection of divisors, in \cite{awada2020rational}, the first named author proved that - by choosing appropriately the discriminants of the divisors - one can intersect up to 20 divisors $\mathcal{C}_d \in \mathcal{C}$ and have nonempty intersection.

\smallskip
\begin{thm}\label{intdiv}\cite{awada2020rational}
For $3\leq n \leq 20$,  
 
\begin{equation}\label{family}
     \displaystyle{\bigcap_{k=1}^{n}} \mathcal{C}_{d_k} \ne \emptyset,
\end{equation}
 
for $d_k \geq 8, d_k \equiv 0,2[6]$ and $d_3,..,d_n= 6 \displaystyle{\prod_i} p_i^2$ or $6 \displaystyle{\prod_i} p_i^2+2$ with $p_i$ a prime number.
\end{thm}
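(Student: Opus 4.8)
The plan is to reduce the statement to a problem about integral quadratic lattices and then solve it by an explicit construction. By the global Torelli theorem for cubic fourfolds (Voisin) together with the description of the image of the period map (Laza, Looijenga), a positive definite lattice $M$ containing a distinguished vector $h^2$ with $(h^2)^2=3$ is contained in $A(X)=H^4(X,\Z)\cap H^{2,2}(X)$ for some smooth cubic fourfold $X$ precisely when $M$ admits a primitive embedding into the odd unimodular lattice $H^4(X,\Z)\cong \langle 1\rangle^{\oplus 21}\oplus\langle -1\rangle^{\oplus 2}$ carrying $h^2$ to the polarization class, and when the resulting period point avoids the two exceptional loci $\mathcal{C}_2$ and $\mathcal{C}_6$ excluded from the image of the period map. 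Since a smooth cubic fourfold lies in $\mathcal{C}_{d_k}$ exactly when $A(X)$ contains a saturated rank $2$ sublattice $K_k\ni h^2$ with $\det K_k=d_k$, it suffices to produce a single admissible positive definite lattice $M$ that simultaneously contains $n$ distinct such labellings $K_1,\dots,K_n$.

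First I would build $M$ explicitly. Writing a labelling as $K=\langle h^2,w\rangle$ with Gram matrix $\begin{pmatrix} 3 & c \\ c & b\end{pmatrix}$, its discriminant is $d=3b-c^2$. Choosing $c=0,\ b=2m^2$ gives $d=6m^2$, while $c=1,\ b=2m^2+1$ gives $d=3(2m^2+1)-1=6m^2+2$; taking $m=\prod_i p_i$ reproduces exactly the two admissible shapes prescribed for $d_3,\dots,d_n$. I would therefore assemble $M$ from $h^2$ and vectors $w_1,\dots,w_{n-1}$ realizing these almost-orthogonal summands, accommodating the two unconstrained discriminants $d_1,d_2$ first by means of the flexibility guaranteed by the fact that any two Hassett divisors already meet \cite{2019arXiv190501936Y}, and then appending the arithmetically rigid discriminants $d_3,\dots,d_n$ through the diagonal blocks above. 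Distinctness of the $n$ labellings forces the $w_k$ to be independent modulo $h^2$, so $\rank M=n+1$; since $M$ must be positive definite while $H^4(X,\Z)$ has signature $(21,2)$, a positive definite sublattice has rank at most $21$, which is exactly the source of the bound $n\leq 20$.

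The technical heart of the argument is to certify the primitive embedding $M\hookrightarrow \langle 1\rangle^{\oplus 21}\oplus\langle -1\rangle^{\oplus 2}$ together with the admissibility. For this I would invoke Nikulin's existence criterion for primitive embeddings into a unimodular lattice, which reduces the question to a compatibility statement about the discriminant quadratic form of $M$ and the ambient signature; the very specific $2$- and $3$-adic shapes of $6m^2$ and $6m^2+2$ are what keep this discriminant form in the admissible range once the orthogonal summands are combined, and this is precisely why the $d_k$ for $k\geq 3$ cannot be taken arbitrarily. I expect the \emph{main obstacle} to lie here: one must guarantee simultaneously that $M$ contains no vector $v$ with $v^2=2$ and no configuration forcing the period point onto $\mathcal{C}_2$ or $\mathcal{C}_6$, since such classes would either violate smoothness or push $X$ out of the image of the period map. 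Balancing the realization of all $n$ prescribed discriminants against this admissibility is the delicate point, and it is controlled exactly by the hypotheses $d_k\geq 8$, $d_k\equiv 0,2\pmod 6$ and the special forms imposed on $d_3,\dots,d_n$.

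Once such an admissible $M$ is produced, surjectivity of the period map onto the complement of $\mathcal{C}_2\cup\mathcal{C}_6$ yields a smooth cubic fourfold $X$ with $M\subseteq A(X)$, hence $X\in\bigcap_{k=1}^n \mathcal{C}_{d_k}$, which proves the asserted nonemptiness. In fact the locus of periods with $A(X)\supseteq M$ has codimension $\rank M-1=n$ in the $20$-dimensional moduli space, so the intersection carries an expected dimension $20-n$; for $n\leq 19$ this is positive, which is what will later furnish the one dimensional families used in Theorem \ref{main}.
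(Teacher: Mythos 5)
Your overall strategy---realizing the intersection lattice-theoretically and then invoking the Laza--Looijenga description of the image of the period map---is exactly the approach of the cited proof in \cite{awada2020rational} (which builds on \cite{2019arXiv190501936Y}), and several of your reductions are correct: the Gram-matrix arithmetic $d=3b-c^2$ does recover the two admissible shapes $6m^2$ and $6m^2+2$, the bound $n\leq 20$ does come from the fact that a positive definite sublattice of a signature $(21,2)$ lattice has rank at most $21$, and the codimension count at the end is right. The problem is that the proposal stops exactly where the proof begins, and two of the deferred steps would fail as written. First, you cannot ``accommodate $d_1,d_2$ by the flexibility guaranteed by the fact that any two Hassett divisors already meet'': that result is an existence statement about cubic fourfolds, not a lattice block you can insert into $M$. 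What is needed are the explicit rank-$3$ Gram matrices (four cases according to $d_1,d_2 \bmod 6$), after which positive definiteness, saturation of each $K_k$, and the absence of labellings of discriminant $2$ and $6$ must all be re-verified for the full rank-$(n+1)$ lattice, including vectors that mix the $(d_1,d_2)$-block with the new $w_k$. That case-by-case short-vector computation is precisely the content of the cited proof and is where the hypotheses on $d_3,\dots,d_n$ actually get used; saying it is ``controlled exactly by the hypotheses'' names the difficulty without resolving it.

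Second, and more seriously, the Nikulin step is not merely unverified: for your lattice it is false in the upper part of the range. If $M\hookrightarrow L\cong \langle 1\rangle^{\oplus 21}\oplus\langle -1\rangle^{\oplus 2}$ is primitive, then the discriminant group $A_M$ is isomorphic (up to sign of the form) to that of the orthogonal complement $M^{\perp}$, so its minimal number of generators is at most $\operatorname{rank} M^{\perp}=22-n$. But your $M$ is assembled from (almost) orthogonal blocks: when the $d_k$ for $k\geq 3$ are taken of the form $6m_k^2$ (which the statement allows), each block $\langle 2m_k^2\rangle$ contributes a cyclic summand of even order, so the $2$-torsion of $A_M$ alone needs at least $n-2$ generators. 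Once $n-2>22-n$, i.e.\ $n>12$, no primitive embedding of this $M$ into $L$ exists, and no $2$- or $3$-adic feature of $6m^2$ or $6m^2+2$ can rescue it. The way around this---and the way the argument is actually run---is not to demand primitivity of all of $M$: one writes the vectors explicitly inside $L$, requires only that each rank-two labelling $K_k=\langle h^2,w_k\rangle$ be saturated in $L$ (this is what membership in $\mathcal{C}_{d_k}$ needs, and it survives replacing $M$ by its saturation, which is what $A(X)$ equals for the very general period point over the locus), and then checks that this saturation contains no labelling of discriminant $2$ or $6$. Without this restructuring and without the explicit verification, the proposal is a correct plan of attack but not yet a proof.
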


A couple of observations are in order. The first remark is that the first two discriminants $d_1$ and $d_2$ can be chosen completely arbitrarily. The second is the following. A very general cubic fourfold $X$ in $\mathcal{C}$ has $CH_2(X)=\mathbb{Z}$, and each time that we take the intersection with a new divisor $\mathcal{C}_{d_i}$ we know that we are adding an algebraic cycle inside $A(X)$. This means that the generic cubic fourfold in $\displaystyle{\bigcap_{k=1}^{n}} \mathcal{C}_{d_k} \ne \emptyset$ has $rk(CH_2(X))=n+1$. If one of the discriminants $d_i$ defines a divisor of cubics with associated K3 surface, it means that all the cubic fourfolds inside the intersection \ref{family} have associated K3 surfaces and, by the definition itself, the generic associated K3 surface has Neron-Severi (NS in what follows) rank equal to $n$.

\medskip

This has allowed the first named author to produce examples of rational cubic fourfolds with associated K3 surfaces of high NS rank (Picard number denoted by $\rho$) $\rho \geq 19$  (see \cite{shioda_inose_1977}, \cite{articlesarti}). These K3 surfaces have a Shioda-Inose structure  and have some particular features (see \cite{1984InMat..75..105M}).

\section{Chow motives of cubic fourfolds and K3 surfaces}


Let  $X \subset \P^5$ be a cubic 4-fold. The first result we need to recall from \cite{BP20} is the existence of a Chow-K\"unneth  decomposition for the motive of $X$. Namely we have

\begin{equation} \label{CK} h(X) = \un \oplus \L \oplus \L^{\rho_2(X)}\oplus t(X)\oplus \L^3 \oplus \L^4, \end{equation}

where $\rho_2(X) = \rank CH_2(X)$,  with $CH_2(X)\subset H^4(X,\Z)$, and $t(X)$ is the transcendental motive of $X$, i.e. $H^*(t(X)) =H^4_{tr}(X,\Q)$. 

\medskip

Let now $X$ be a special cubic fourfold contained in a divisor $\mathcal{C}_d$. In \cite{Bu}, B\"ulles shows that for certain values of $d$, there exists a K3 surface $S$ such that 

\begin{equation}\label{motk3}
t(X) \simeq t_2(S)(1).
\end{equation}

Here $t_2(S)$ is the transcendental motive of $S$ , i.e.

$$h(S)=\un \oplus \L^{\rho(S)}\oplus t_2(S) \oplus \L^2,$$

where $\rho(S)$ is the rank of the Neron-Severi group $NS(S)$. More precisely, the isomorphism (\ref{motk3}) holds whenever $d$ satisfies the following numerical condition

$$(***) : \exists  f,g \in \mathbb{Z}  \  with \  g\vert (2n^2+2n +2)   \  n\in \N  \ and \   d =f^2g.$$

Therefore, in this case, $h(X) \in \sM^{Ab}_{rat}$ if and only if $h(S)\in \sM^{Ab}_{rat}$. Note that an isomorphism $t(X)\simeq t_2(S)(1)$ can never hold if $X$ is not special, i.e. if $\rho_2(X)=1$, see \cite[Prop.3.4]{BP20}. Remark moreover that $d=14$  satisfies (***) with  $f=1,\ n=2$ and $g=14$.
 
\medskip

On the other hand, finite dimensionality of motives of K3 surfaces has been addressed in \cite{Ped}. In particular the following is proved

\begin{thm}\label{k3motive}
Let $S$ be a smooth complex projective K3 surface with $\rho(S) = 19, 20$.
Then the motive $h(X) \in \mathcal{M}_{rat} (C)$ is finite dimensional and lies in the subcategory
$\mathcal{M}_{rat}^{Ab} (C)$. 
\end{thm}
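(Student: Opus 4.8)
The plan is to reduce the assertion to the known abelianity of Kummer surfaces, exploiting the Shioda--Inose structure carried by K3 surfaces of large Picard number. First I would invoke Morrison \cite{1984InMat..75..105M}: when $\rho(S)=19$ or $20$ the surface $S$ admits a Shioda--Inose structure. Explicitly, there is an abelian surface $A$ (a product of isogenous elliptic curves with complex multiplication in the case $\rho(S)=20$), a symplectic Nikulin involution $\iota$ on $S$, and a diagram of dominant rational maps relating $S$, the Kummer surface $\operatorname{Km}(A)$ and $A$, which induces a Hodge isometry $T_S\otimes\Q\simeq T_A\otimes\Q\simeq T_{\operatorname{Km}(A)}\otimes\Q$ between the transcendental lattices.

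Since $\operatorname{Km}(A)$ is obtained from $A$ by passing to the quotient by $-1$ and blowing up the sixteen nodes, its motive lies in $\sM^{Ab}_{rat}$ and is finite dimensional (this is item (3) of the list in the Introduction, and follows because $\sM^{Ab}_{rat}$ is stable under the operations involved). In particular the transcendental summand $t_2(\operatorname{Km}(A))$ is a finite dimensional, abelian motive. The strategy is then to realise $t_2(S)$ as a direct summand of $t_2(\operatorname{Km}(A))$. Taking the closure of the graph of the quotient map yields a correspondence whose induced morphisms $\pi_*\colon t_2(S)\to t_2(\operatorname{Km}(A))$ and $\pi^*$ in the opposite direction satisfy $\pi^*\circ\pi_*=\mathrm{id}+\iota_*$ on $t_2(S)$, because $\pi$ has degree two with deck transformation $\iota$ (blow-ups do not affect the transcendental motive).

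The main obstacle is to promote the equality $\iota_*=\mathrm{id}$, which holds on transcendental cohomology because $\iota$ is symplectic and hence fixes the holomorphic two--form, to the same equality in $\End(t_2(S))$. Here I would appeal to the theorem of Voisin that a symplectic involution of a K3 surface acts as the identity on $CH_0(S)$, from which one deduces $\iota_*=\mathrm{id}$ on the transcendental motive. Granting this, $\pi^*\circ\pi_*=2\cdot\mathrm{id}$ holds genuinely in $\End(t_2(S))$, so $\tfrac12\,\pi^*$ is a retraction of $\pi_*$ and exhibits $t_2(S)$ as a direct summand of $t_2(\operatorname{Km}(A))$. I expect precisely this passage from a cohomological identity to a motivic one to be the delicate point, since a correspondence trivial on cohomology need not be trivial on a Chow motive unless finite dimensionality or a Bloch--type input is available.

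It remains to conclude. A direct summand of a finite dimensional, abelian motive is again finite dimensional and lies in $\sM^{Ab}_{rat}$, so $t_2(S)\in\sM^{Ab}_{rat}$; comparing the ranks of the cohomological realisations, both equal to $\rank T_S$, one even obtains $t_2(S)\simeq t_2(\operatorname{Km}(A))$, the complementary summand being a finite dimensional motive with vanishing cohomology and hence zero. Feeding this into the Chow--K\"unneth decomposition $h(S)=\un\oplus\L^{\rho(S)}\oplus t_2(S)\oplus\L^2$, in which the Tate summands are plainly finite dimensional and abelian, and using that $\sM^{Ab}_{rat}$ is closed under direct sums, gives $h(S)\in\sM^{Ab}_{rat}$, finite dimensional, as desired.
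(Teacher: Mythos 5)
This theorem is not proved in the paper itself --- it is quoted from \cite{Ped} --- and your argument is essentially a correct reconstruction of the proof given there: Morrison's theorem \cite{1984InMat..75..105M} supplying the Shioda--Inose structure for $\rho(S)=19,20$, the degree-two Nikulin quotient correspondence relating $t_2(S)$ and $t_2(\operatorname{Km}(A))$, Voisin's theorem that a symplectic involution acts trivially on $CH_0(S)$ (promoted to the identity in $\End(t_2(S))$ via the Kahn--Murre--Pedrini description of morphisms of transcendental motives), and the finite dimensionality and abelianity of Kummer surface motives. You also correctly identify the one genuinely delicate point (passing from the cohomological to the motivic identity for $\iota_*$) and resolve it with the right input, so nothing essential is missing.
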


\section{Main Theorem}

In this section we will give the proof of the following theorem.

\begin{thm}\label{main2}
Every Hassett divisor $\mathcal{C}_d$ contains a one dimensional family of cubic fourfolds, whose Chow motive is finite dimensional and Abelian.
\end{thm}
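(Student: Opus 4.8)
The plan is to realize the required family as a one-dimensional component of an intersection of Hassett divisors, arranged so that its generic member carries an associated K3 surface of N\'eron--Severi rank $19$, and then to transport finite dimensionality and abelianity from that K3 surface up to the cubic fourfold through B\"ulles's motivic isomorphism (\ref{motk3}). First I would fix the given divisor by setting $d_1 = d$, which is permissible since in Theorem \ref{intdiv} the first two discriminants are arbitrary. I would then take $d_2$ to be an admissible discriminant distinct from $d$ that both admits an associated K3 surface and satisfies the numerical condition (***); the canonical choice is $d_2 = 14$, which satisfies (***) with $f=1,\ n=2,\ g=14$. The remaining discriminants $d_3,\dots,d_{19}$ I would choose of the shape $6\prod_i p_i^2$ or $6\prod_i p_i^2 + 2$ demanded by Theorem \ref{intdiv}, so that $\bigcap_{k=1}^{19}\mathcal{C}_{d_k}\neq\emptyset$.

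Taking $n = 19$ is the decisive numerical balance. The expected codimension is then $19$ inside the $20$-dimensional moduli space $\mathcal{C}$, so the intersection carries a one-dimensional component; and by the dimension discussion preceding Theorem \ref{intdiv}, the generic cubic fourfold $X$ in this component has $\rank \CH_2(X) = n+1 = 20$ and an associated K3 surface $S$ (via the discriminant-$14$ labelling) of N\'eron--Severi rank $\rho(S) = n = 19$. This component is the sought one-dimensional family.

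Next I would carry out the motivic transport. Since $X\in\mathcal{C}_{14}$ and $14$ satisfies (***), B\"ulles's isomorphism gives $t(X)\simeq t_2(S)(1)$. By Theorem \ref{k3motive}, for $\rho(S)\in\{19,20\}$ the motive $h(S)$ is finite dimensional and lies in $\sM^{Ab}_{rat}$; as $t_2(S)$ is a direct summand of $h(S)$ and $\sM^{Ab}_{rat}$ is a thick tensor subcategory containing $\L$, the twist $t_2(S)(1)$, and hence $t(X)$, is finite dimensional and abelian. Feeding this into the Chow--K\"unneth decomposition (\ref{CK}), every remaining summand is a Tate motive $\un,\L,\L^{\rho_2(X)},\L^3,\L^4$, all finite dimensional and in $\sM^{Ab}_{rat}$; therefore $h(X)$ is finite dimensional and abelian. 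The essential point is that Theorem \ref{k3motive} covers both $\rho(S)=19$ and $\rho(S)=20$, so the conclusion holds not merely for the generic member but for every cubic in the family, including the special points where $\rho(S)$ jumps to $20$.

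I expect the main obstacle to be arithmetic bookkeeping rather than anything conceptual: one must pick the seventeen discriminants $d_3,\dots,d_{19}$ so that they are simultaneously admissible for Theorem \ref{intdiv}, pairwise distinct, and such that each successive intersection genuinely drops the dimension by one, so that the selected component really is one-dimensional with generic K3 rank exactly $19$ and not accidentally higher. I would resolve this by appealing to the lattice-theoretic transversality underlying Theorem \ref{intdiv}, which secures the expected codimension, and by noting that there are infinitely many primes $p$ furnishing valid discriminants $6p^2$ or $6p^2+2$; this abundance is precisely what later promotes the single family to the countably many families of Theorem \ref{main}.
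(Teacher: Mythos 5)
Your proposal is correct and follows essentially the same route as the paper: you intersect the given $\mathcal{C}_d$ with $\mathcal{C}_{14}$ and seventeen further admissible divisors via Theorem \ref{intdiv} (nineteen divisors in all, giving a one-dimensional family whose generic member has an associated K3 of rank $19$, jumping to $20$ at special points), and then transfer finite dimensionality and abelianity from the K3 to the cubic through B\"ulles's isomorphism (\ref{motk3}) and the Chow--K\"unneth decomposition (\ref{CK}). Your explicit remark that Theorem \ref{k3motive} covers both $\rho=19$ and $\rho=20$, so the conclusion holds at every point of the family, is exactly the closing step of the paper's argument.
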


\begin{proof}
Let us consider $\mathcal{C}_d\subset \mathcal{C}$ any divisor of special cubic fourfolds. By Theorem \ref{intdiv}, we can chose appropriately 17 divisors $\mathcal{C}_{d_1},\dots \mathcal{C}_{d_{17}}$ such that

$$\mathcal{F}:=\mathcal{C}_d\cap \mathcal{C}_{14}\cap (\bigcap_{k=1}^{17})\mathcal{C}_{d_k} \neq \emptyset.$$

In particular, by the proof of Theorem \ref{intdiv} in \cite{awada2020rational}, one sees that $\mathcal{F}$ is a dimension one algebraic subvariety of $\mathcal{C}_d$, that is a one dimensional family of cubic fourfolds. On the other hand, by construction, the family $\mathcal{F}$ is also contained in $\mathcal{C}_{14}$, hence all the cubics in $\mathcal{F}$ have an associated K3 surface. Moreover, by the results of Section 3 of \cite{awada2020rational} we observe that cubic fourfolds in $\mathcal{F}$ have associated K3 surfaces with Néron-Severi rank $\rho(S)\geq 19$. More precisely: the generic cubic fourfold in $\mathcal{F}$ has an associated K3 surface with $\rho(S)=19$; the intersection points of $\mathcal{F}$ with further Hassett divisors represent cubic fourfolds whose associated K3 surface has $\rho(S)=20$. By Thm. \ref{k3motive} the Chow motives of these K3 surfaces are finite dimensional and Abelian. Now we need to evoke the isomorphism of Eq. \ref{motk3}. The divisor $\mathcal{C}_{14}$ is among those whose cubic fourfolds have Chow motives that decomposes as follows

$$h(X) =h(X) = \un \oplus \L \oplus \L^{\rho_2(X)}\oplus t_2(S)(1)\oplus \L^3 \oplus \L^4$$

where $t_2(S)$ is the transcendental part of the motive $h(S)$ of the associated K3 surface. Now, if the motive of the associated K3 surface is finite dimensional or Abelian, then also the motive of the cubic fourfold has the same property. 
This means in turn that, by Thm. \ref{k3motive}, all the cubics in $\mathcal{F}$ have finite dimensional and Abelian Chow motive, since the associated K3 surfaces have Neron-Severi group of rank bigger or equal to 19.
\end{proof}

\begin{rk}
Let us point out that, since all the family $\mathcal{F}$ is contained in $\mathcal{C}_{14}$, then, by results of \cite{MR3968870}, all cubic fourfolds in $\mathcal{F}$ are rational.
\end{rk}

In the proof of Thm. \ref{main2}, we chose $\mathcal{C}_{14}$ for simplicity, since it is the first of the series that has $d$ verifying condition $(***)$, and where cubics have associated K3s. Any other divisor $\mathcal{C}_d$ obeying $(***)$, such that cubics in $\mathcal{C}_d$ have associated K3s, would have worked. Hence we can say even more.

\begin{cor}
Every Hassett divisor $\mathcal{C}_d$ contains a countable infinity of one dimensional families of cubic fourfolds, whose Chow motive is finite dimensional and Abelian.
\end{cor}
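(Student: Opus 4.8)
The plan is to exploit the observation, made just above, that the discriminant $14$ plays no distinguished role in the proof of Theorem \ref{main2}. The only properties of $\mathcal{C}_{14}$ actually used there are that $14$ satisfies the numerical condition $(***)$ --- which is precisely what produces the B\"ulles isomorphism $t(X)\simeq t_2(S)(1)$ of (\ref{motk3}) --- and that every cubic in $\mathcal{C}_{14}$ carries an associated K3 surface. Accordingly, I would first isolate the set $D$ of \emph{admissible} auxiliary discriminants: those $d'\geq 8$ with $d'\equiv 0,2\ [6]$ that satisfy $(***)$ and are not divisible by $4$, $9$, or any odd prime $p\equiv 2\ [3]$ (Hassett's condition for the existence of an associated K3). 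The crucial arithmetic point is that $D$ is infinite: taking $f=1$ and $g=2n^2+2n+2$ in $(***)$, one checks that $2(n^2+n+1)$ is even but never divisible by $4$, and that for infinitely many $n$ it also avoids divisibility by $9$ and by all odd primes $\equiv 2\ [3]$ while meeting $\equiv 0,2\ [6]$; e.g. $n=2,3,4,5,6,\dots$ already give $14,26,42,62,86,\dots\in D$.

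For each $d'\in D$ distinct from $d$ (and from the fillers below), I would then rerun the construction of Theorem \ref{main2} verbatim, with $\mathcal{C}_{d'}$ in the role of $\mathcal{C}_{14}$. By the remark following Theorem \ref{intdiv} the first two discriminants in that intersection statement are entirely free, so I take $d$ and $d'$ as these two and fix, once and for all, seventeen further discriminants $d_1,\dots,d_{17}$ of the prescribed shape $6\prod_i p_i^2$ or $6\prod_i p_i^2+2$. Theorem \ref{intdiv} (with $n=19$) then yields a nonempty, one-dimensional family $\mathcal{F}_{d'}:=\mathcal{C}_d\cap\mathcal{C}_{d'}\cap(\bigcap_{k=1}^{17}\mathcal{C}_{d_k})\subset\mathcal{C}_d$. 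Exactly as in Theorem \ref{main2}, membership in $\mathcal{C}_{d'}$ endows every member of $\mathcal{F}_{d'}$ with an associated K3 surface whose generic N\'eron--Severi rank along the family is $19$ (and $20$ at the isolated points of further intersection); Theorem \ref{k3motive} together with (\ref{motk3}) then forces $h(X)$ to be finite dimensional and to lie in $\sM^{Ab}_{rat}$ for every $X\in\mathcal{F}_{d'}$.

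The remaining --- and really the only substantive --- step is to verify that distinct $d'\in D$ yield genuinely distinct families, so that we obtain a countable infinity rather than one family counted repeatedly; this is where I expect the only real obstacle to sit. A clean argument is a rank count: along $\mathcal{F}_{d'}$ the generic cubic has $\rank CH_2(X)=20$, by construction of the nineteen-fold intersection. Were $\mathcal{F}_{d'}=\mathcal{F}_{d''}$ as subvarieties of $\mathcal{C}_d$ for admissible $d'\neq d''$ (with the same fixed fillers $d_1,\dots,d_{17}$), the generic member would lie in the twenty-fold intersection $\mathcal{C}_d\cap\mathcal{C}_{d'}\cap\mathcal{C}_{d''}\cap(\bigcap_{k=1}^{17}\mathcal{C}_{d_k})$, forcing the generic value $\rank CH_2(X)=21$, a contradiction. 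This step rests on the genericity computation of \cite{awada2020rational} (that each new divisor genuinely raises the rank), which is exactly the delicate input one must quote. Granting it, the families $\{\mathcal{F}_{d'}\}_{d'\in D,\,d'\neq d}$ are pairwise distinct, and since $D$ is infinite the corollary follows. One could equally fix $d'=14$ and vary a filler instead, but varying $d'$ over $D$ keeps the K3 and rationality properties transparent.
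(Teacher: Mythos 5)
Your construction is, in substance, exactly the paper's: the published proof of this corollary is a single sentence, namely take $f=1$ and $g=2n^2+2n+2$ in $(***)$, so that the auxiliary discriminant $d'=2(n^2+n+1)$ runs through $14,26,42,62,\dots$, and rerun the proof of Theorem \ref{main2} with $\mathcal{C}_{d'}$ in place of $\mathcal{C}_{14}$. Your verification that these $d'$ are admissible is correct, and in fact \emph{every} such $d'$ works, not just infinitely many: $n^2+n+1$ is odd (so $4\nmid d'$), is never divisible by $9$, and all of its odd prime factors are $3$ or $\equiv 1\ [3]$, so Hassett's condition holds for the whole sequence. Up to this point your argument and the paper's coincide, including the choice of the seventeen fillers, the dimension count, and the appeal to Theorem \ref{k3motive} together with the isomorphism (\ref{motk3}).

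Where you go beyond the paper is the distinctness of the families $\mathcal{F}_{d'}$, a point the paper passes over in silence; unfortunately your rank-count for it is not correct as stated. Containment of $\mathcal{F}_{d'}$ in a twentieth divisor $\mathcal{C}_{d''}$ does \emph{not} force the generic rank of $A(X)$ to jump to $21$: a labelling of discriminant $d''$ is merely a saturated rank-2 sublattice of $A(X)$ containing $h^2$, and a fixed positive definite rank-20 lattice contains rank-2 sublattices of infinitely many discriminants, so it may already contain a labelling of discriminant $d''$ without any new algebraic cycle appearing. The paper's remark that ``each new divisor adds a cycle'' is a statement about the generic member of the component constructed in \cite{awada2020rational} from a rank-$(n+1)$ lattice, not a statement that an arbitrary cubic lying on $n$ Hassett divisors has $\rank CH_2(X)\geq n+1$ (two distinct labellings force rank $\geq 3$, but nothing additive beyond that). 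So the literal ``countable infinity'' --- i.e.\ pairwise distinctness of the $\mathcal{F}_{d'}$ --- is not established by your argument; to be fair, it is not established by the paper's one-line proof either, which implicitly assumes that varying $d'$ varies the family. A correct treatment would have to argue at the level of the lattices themselves, e.g.\ that the generic member of $\mathcal{F}_{d'}$ has $A(X)$ equal to (the saturation of) the constructed lattice, and that these lattices are pairwise non-isometric as polarized lattices for distinct $d'$.
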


\begin{proof}
Just consider, in $(***)$, $f=1$ and $g=2n^2+2n+2$, for any $n\in \mathbb{N}$. This gives an infinite series of values of $d$, such that cubics in $\mathcal{C}_d$ have associated K3 surfaces.
\end{proof}

\subsection{Density of cubic fourfolds with abelian motive}

More generally, let $\mathcal{G}_d$ the moduli space of polarized K3 surfaces of degree d. This is a quasi-projective 19-dimensional algebraic variety.

\begin{thm}
Let $d$ be $d$ not divisible by 4, 9, or any odd prime number $p \equiv 2\ [3]$. Then there exists a countable, dense (in the complex topology) number of points in a non-empty Zariski open subset inside $\mathcal{C}_d$ such that the corresponding fourfolds have finite dimensional Chow motive.
\end{thm}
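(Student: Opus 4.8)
The plan is to combine the existence of the associated K3 surface for divisors $\mathcal{C}_d$ satisfying the Hassett numerical condition with the known density of K3 surfaces of high Picard rank inside their moduli space. First I would invoke Hassett's result, already recalled in Section 2: under the stated arithmetic hypotheses on $d$ (not divisible by $4$, $9$, or any odd prime $p \equiv 2\ [3]$), a generic cubic fourfold $X \in \mathcal{C}_d$ admits an associated polarized K3 surface $S$ of degree $d$, with the Hodge-isometry $K_d^\perp \cong H^2_{prim}(S,\Z)(-1)$. This association defines a dominant rational map (indeed, on a suitable Zariski open locus, an isomorphism up to finite data) $\mathcal{C}_d \dashrightarrow \mathcal{G}_d$ between the $19$-dimensional divisor and the $19$-dimensional moduli space of degree-$d$ K3 surfaces. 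I would fix the non-empty Zariski open subset $U \subseteq \mathcal{C}_d$ on which this map is defined and has image a non-empty Zariski open $V \subseteq \mathcal{G}_d$.

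Next I would transport the problem to $\mathcal{G}_d$ and use the classical fact that K3 surfaces with maximal Picard rank are dense for the complex topology. Concretely, the Noether--Lefschetz loci $\mathcal{G}_d^{\rho \geq r} \subseteq \mathcal{G}_d$, parametrizing K3 surfaces whose Picard number jumps to at least $r$, are countable unions of algebraic subvarieties, and the locus $\rho = 20$ (singular K3 surfaces) is a countable union of points that is dense in $\mathcal{G}_d$ for the analytic topology — this is a standard consequence of the surjectivity of the period map together with the density of the relevant Noether--Lefschetz hypersurfaces in the period domain. Intersecting this countable dense set with the open subset $V$ yields a countable, analytically dense subset of $V$ consisting of K3 surfaces $S$ with $\rho(S) = 20$ (and one may equally include the $\rho(S) = 19$ locus, which is dense of its own in a stronger sense). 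By Theorem \ref{k3motive}, every such $S$ has finite-dimensional Chow motive lying in $\mathcal{M}_{rat}^{Ab}(C)$.

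Finally I would pull this density back to $\mathcal{C}_d$. Since the association map $U \to V$ is continuous and dominant with finite generic fibers, the preimage of the countable dense set in $V$ is again a countable set that is dense for the complex topology in the open subset $U \subseteq \mathcal{C}_d$. For each such point $[X]$, the associated K3 surface $S$ has $\rho(S) \in \{19,20\}$, and because $d$ also satisfies condition $(***)$ one has the motivic isomorphism $t(X) \simeq t_2(S)(1)$ of Equation \ref{motk3}; combined with the Chow--K\"unneth decomposition \ref{CK}, this transfers finite dimensionality and abelianity from $h(S)$ to $h(X)$ exactly as in the proof of Theorem \ref{main2}. Hence the corresponding cubic fourfolds have finite-dimensional Chow motive, giving the claimed countable dense set inside a non-empty Zariski open subset of $\mathcal{C}_d$.

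The main obstacle, and the point requiring the most care, is the compatibility of the two arithmetic conditions on $d$: the theorem is stated only under Hassett's K3-association condition, whereas the motivic transfer \ref{motk3} requires $(***)$. I would need either to restrict to values of $d$ satisfying both (so that the associated K3 genuinely governs $t(X)$ motivically), or to argue that for the density statement it suffices to have finite-dimensionality of $h(X)$ via whatever motivic relationship holds for the given $d$. A secondary technical point is making precise the Zariski-openness and fiber structure of the association map so that "dense" genuinely pulls back; invoking the surjectivity of the period map and the well-understood geometry of $\mathcal{C}_d \dashrightarrow \mathcal{G}_d$ should suffice, but one must be careful that the analytic density is not destroyed by removing the (measure-zero, countable) indeterminacy locus.
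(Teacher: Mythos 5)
Your proposal is correct and follows essentially the same route as the paper: use Hassett's association between $\mathcal{C}_d$ and the moduli space of polarized K3 surfaces, invoke the density (in the complex topology) of singular ($\rho=20$) K3 surfaces in that moduli space, transport this density through the dominant, generically finite association map to a Zariski open subset of $\mathcal{C}_d$, and finally transfer finite dimensionality and abelianity from $h(S)$ to $h(X)$ via (\ref{motk3}) and (\ref{CK}). The only differences are cosmetic: the paper runs the association map in the opposite direction, $\mathcal{G}_{\frac{d+2}{2}} \to \mathcal{C}_d$, $S_X \mapsto X$, pushing the dense countable set forward rather than pulling it back, and it leaves the motivic transfer step implicit (``this directly implies the claim''), whereas you spell it out.

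The one obstacle you flag --- that the theorem assumes only Hassett's admissibility condition while the isomorphism $t(X)\simeq t_2(S)(1)$ requires $(***)$ --- is not a genuine gap, although neither you nor the paper discharges it explicitly. Admissibility ($d$ not divisible by $4$, $9$, or any odd prime $p\equiv 2\ [3]$) forces $d=2m$ with $m$ odd, $9\nmid m$, and every prime factor of $m$ other than $3$ congruent to $1$ modulo $3$; for such primes $-3$ is a quadratic residue, so $x^2+x+1\equiv 0$ is solvable modulo each prime power dividing $m$ (and modulo $3$), and the Chinese Remainder Theorem produces an integer $n$ with $m \mid n^2+n+1$, hence $d \mid 2n^2+2n+2$. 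Thus every admissible $d$ satisfies $(***)$ with $f=1$ and $g=d$, so your first proposed fix (restricting to $d$ satisfying both conditions) is automatic, and the argument goes through for all $d$ in the statement.
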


\begin{proof}
Let $X\in \mathcal{C}_d$, for $d$ in the range of the claim here above. That is: $X$ has one (or two, see \cite{Ha1}) associated polarized K3 surface $S_X$ in $\mathcal{G}_{\frac{d+2}{2}}$. Then the map

\begin{eqnarray}\label{asso}
\mathcal{G}_{\frac{d+2}{2}} & \to & \mathcal{C}_d;\\
S_X & \mapsto & X;
\end{eqnarray}

is rational and dominant. Hence, if $d$ is in the range here above, there exists an open set $\mathcal{U}_d$ of $\mathcal{C}_d$ such that for every $X\in \mathcal{U}_d$ there exists a K3 surface $S_X$ of degree $d$ associated to $X$. We observe also that singular K3 surfaces form a (countable) subset of the moduli space $\mathcal{G}_{\frac{d+2}{2}}$ which is dense in the complex topology. The proof of this fact goes along the same lines as the proof of the density of all K3 surfaces in the period domain (see  \cite[Corollary VIII.8.5]{ccs}). By the dominance of the map in $(\ref{asso})$, this directly implies the claim.
\end{proof}

\subsection{Some remarks on Hyperk\"ahler varieties}

In this last part of the paper, we draw consequences on Abelianity and finite dimensionality of the motive of some Hyperk\"ahler varieties related to cubic fourfolds from the results of the preceding section.

\medskip

Notably, we will consider $F(X)$  the Fano variety of lines and $L(X)$, the 8-fold constructed in \cite{LLSvS} from the space of twisted cubic curves on a cubic fourfold not containing a plane. The 4-dimensional $F(X)$ is in general deformation equivalent to the Hilbert scheme  $S^{[2]}$, with $S$ a K3 surface, while $L(X)$ is deformation equivalent to $S^{[4]}$. 
For every even complex dimension  there are two known deformations classes of irreducible holomorphic symplectic varieties: the Hilbert scheme $S^{[n]}$ of n-points on a K3 surface $S$ and the generalized Kummer. A generalized Kummer variety $X$ is of the form $X=K^n(A)=a^{-1}(0)$, where $A$ is an abelian surface and $a:  A^{[n+1]} \to A$ is the Albanese map. 
In dimension 10 there is also an example, usually referred as OG10,  discovered by O'Grady. The Hyperk\"ahler variety OG10 is not deformation equivalent to $S^{[5]}$.

\medskip

Let  $\sM_A(\C)$ be  the category of André motives which is obtained from the category of homological motives $\sM_{hom}(\C)$ by formally adjoining the Lefschetz involutions $L^{d-i}: H^i(X) \to H^{2d-i}(X)$, where $L^{d-i}$ is induced by the hyperplane section. By the Standard Conjecture $B(X)$, for every $i\le d$ there exists an algebraic correspondence inducing the isomorphism $H^{2d-i}(X)\to H(X)$ inverse to $L^{d-i}$. Therefore, under $B(X)$ the category of André motives coincides with $\sM_{hom}(\C)$.   The André motive of a K3  surface $S$  and of  a cubic fourfold $X$ belong to the  full subcategory  $\sM^{Ab}_{A}(\C) $ generated by the motives of abelian varieties, see [An,10.2.4.1].

\medskip

In [Sc] it is proved that the André motive of a Hyperk\"ahler variety which is  deformation equivalent  to  $S^{[n]}$ lies in $\sM^{Ab}_A$.  Soldatenkov [So] proves that  if $X_1$ and $X_2$ are  deformation equivalent projective Hyperk\"ahler manifolds then the André  motive of $X_1$  is abelian if and only if the André motive of $X_2$ is abelian. In a recent preprint (see \cite{FFZ})  it is proved that also the André motive of OG10 lies in $\sM^{Ab}_{A}(\C)$. Therefore the André motives of all the known deformation classes of Hyperk\"ahler varieties lie in $\sM^{Ab}_A$. These results suggest the following conjecture

\begin{conj}  The motive of a Hyperk\"ahler manifold  is of Abelian type in $\sM_{rat}(\C)$.\end{conj}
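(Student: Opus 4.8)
The plan is to reduce the conjecture, which concerns the \emph{rational Chow} motive, to a single input about finite dimensionality, by exploiting the abelianity of the \emph{homological} motive that is already known. The reduction rests on the following principle. By Kimura's theorem the motive of an abelian variety is finite dimensional, so every object of $\sM^{Ab}_{rat}(\C)$ is finite dimensional; conversely, suppose $X$ is a Hyperk\"ahler manifold whose Chow motive $h(X)$ is finite dimensional. Since the André (equivalently, under $B(X)$, homological) motive of $X$ lies in $\sM^{Ab}_A(\C)=\sM^{Ab}_{hom}(\C)$ by the results of Schlickewei, Soldatenkov and Floccari--Fu--Zhao recalled above, there is an abelian variety $A$ and Chow correspondences $f\colon h(X)\to h(A)$, $g\colon h(A)\to h(X)$ with $g\circ f$ equal to $\mathrm{id}_{h(X)}$ modulo homological equivalence. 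As $h(X)$ is finite dimensional, the homologically trivial endomorphism $\mathrm{id}_{h(X)}-g\circ f$ is nilpotent, hence $g\circ f$ is invertible in $\End(h(X))$; then $(g\circ f)^{-1}\circ g$ splits $h(X)$ off $h(A)$ already in $\sM_{rat}(\C)$, so $h(X)\in\sM^{Ab}_{rat}(\C)$. In other words, \emph{granting the known homological abelianity, the conjecture is equivalent to the Kimura--O'Sullivan finite dimensionality of the Chow motive of every Hyperk\"ahler manifold.}

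With this reformulation in hand, I would first verify finite dimensionality for a well-chosen representative in each known deformation class. For the $S^{[n]}$ type, choose a K3 surface $S$ with $h(S)$ finite dimensional and abelian, for instance a Kummer surface or one of Picard rank $\ge 19$ as in Theorem \ref{k3motive}; the de Cataldo--Migliorini decomposition expresses $h(S^{[n]})$ through symmetric tensor powers of $h(S)$ twisted by powers of $\L$, and since $\sM^{Ab}_{rat}(\C)$ is a thick rigid tensor subcategory one gets $h(S^{[n]})\in\sM^{Ab}_{rat}(\C)$, in particular finite dimensional. For the generalized Kummer type $K^n(A)$, the motive is cut from $h(A^{[n+1]})$ by projectors and is therefore built from $h(A)$, again landing in $\sM^{Ab}_{rat}(\C)$. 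For the exceptional O'Grady types OG6 and OG10 one would invoke the Chow-motivic refinements of the corresponding abelianity statements. This produces, in every known deformation class, at least one member whose Chow motive is finite dimensional and abelian.

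The decisive difficulty, and the main obstacle, is to pass from these special members to an \emph{arbitrary} one. Finite dimensionality, exactly like abelianity, is not known to be deformation invariant at the level of Chow motives: Chow groups themselves are not deformation invariant, and the nilpotence argument underlying Soldatenkov's deformation invariance operates only \emph{after} homological realization, where it cannot see the kernel of $\sM_{rat}(\C)\to\sM_{hom}(\C)$. Thus one cannot simply propagate the conclusion along the family. I would attack the general fiber through the Lefschetz standard conjecture, established for $S^{[n]}$ by Charles--Markman, to produce a self-dual Chow--K\"unneth decomposition of $h(X)$, and then attempt to control the resulting refined summands using the multiplicative (Beauville--Voisin type) splitting conjectured for Hyperk\"ahler varieties together with the Fourier-theoretic methods of Shen--Vial. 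The crux is that proving finite dimensionality of $h(X)$ for the very general $S^{[n]}$, $K^n(A)$, OG6 or OG10 is tantamount to a case of the Kimura--O'Sullivan conjecture that lies beyond current techniques; this is precisely why the Chow-level statement remains conjectural even though its homological shadow is a theorem.
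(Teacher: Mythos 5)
The statement you were asked to prove is stated in the paper as a \emph{conjecture}, and the paper offers no proof of it: the only evidence assembled there lives at the level of Andr\'e motives (Schlickewei for the $S^{[n]}$ deformation class, Soldatenkov's deformation invariance of abelianity of Andr\'e motives, Floccari--Fu--Zhao for OG10), and the authors explicitly pass from these homological-type results to the Chow-level statement only as a suggestion. So there is no paper proof to match, and your proposal --- as you yourself concede in its final paragraph --- does not close the gap either: it ends exactly where the open problem begins. That is the genuine gap, and it is not a repairable local defect: proving finite dimensionality of $h(X)$ for the very general projective member of even one deformation class (say $S^{[2]}$ for a very general K3 surface $S$, where already $h(S)$ is not known to be finite dimensional) is an open case of the Kimura--O'Sullivan conjecture, and neither the Charles--Markman Chow--K\"unneth input nor the Shen--Vial multiplicative splitting is known to yield it.

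That said, your reduction is worth assessing on its own terms, since it is the genuinely substantive part of the proposal. The nilpotence lifting is correct and standard: if $h(X)$ is finite dimensional and there are Chow correspondences $f\colon h(X)\to h(A)$, $g\colon h(A)\to h(X)$ with $g\circ f=\mathrm{id}$ modulo homological equivalence, then $\mathrm{id}-g\circ f$ is nilpotent, $g\circ f$ is invertible in $\End(h(X))$, and $h(X)$ splits off $h(A)$ in $\sM_{rat}(\C)$. But note one caveat you glide over: morphisms in the Andr\'e category $\sM_A(\C)$ are not a priori induced by algebraic correspondences --- the Lefschetz involutions are adjoined \emph{formally} --- so to extract your $f$ and $g$ as Chow correspondences from the statement ``the Andr\'e motive of $X$ is abelian'' you need the standard conjecture $B(X)$ (as the paper itself remarks, $\sM_A(\C)$ coincides with $\sM_{hom}(\C)$ only under $B$). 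This is known for varieties of $S^{[n]}$-type by Charles--Markman, but not for an arbitrary Hyperk\"ahler manifold, so even your reformulation ``the conjecture is equivalent to finite dimensionality of $h(X)$'' is itself conditional. Your second paragraph (special members of each class via de Cataldo--Migliorini, generalized Kummers, OG types) is sound and matches what is in the literature at Chow level, and your diagnosis of why deformation arguments fail in $\sM_{rat}(\C)$ --- Soldatenkov's method operates only after homological realization and cannot see the kernel of $\sM_{rat}(\C)\to\sM_{hom}(\C)$ --- is precisely the reason the paper leaves this statement as a conjecture. In short: an honest and well-informed reduction, but not a proof, and no proof is currently known.
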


By \cite[Sect. 6]{decamiglio}, the Hilbert scheme $S^{[n]}$ of a K3 surface with finite dimensional (or Abelian) motive has finite dimensional (or Abelian) motive. Now, recall that our family $\mathcal{F}$ of cubic fourfolds from Thm. \ref{main2} entirely lies in $\mathcal{C}_{14}$. Moreover, for all cubics $X$ in $\mathcal{C}_{14}$, we have an isomorphism $F(X)\cong S^{[2]}$ \cite{Ha1}, where $S$ is the associated K3. Hence it is straightforward to check that we have the following.

\begin{prop}\label{fano}
All Hyperk\"ahler fourfolds $F(X)$, $X\in \mathcal{F}$, have finitely generated
and Abelian Chow motive.
\end{prop}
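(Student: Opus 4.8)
The plan is to combine the main theorem on cubic fourfolds with the transfer of abelianity and finite-dimensionality from a K3 surface to its Hilbert scheme of points. First I would observe that every cubic fourfold $X$ in our family $\mathcal{F}$ lies inside $\mathcal{C}_{14}$, and that $\mathcal{C}_{14}$ satisfies both the condition $(***)$ and the numerical condition $d = 2(n^2+n+1)$ with $n = 2$, which guarantees the isomorphism $F(X) \cong S^{[2]}$ with $S$ the associated K3 surface (by \cite{Ha1}). So the Fano variety of lines on each such $X$ is literally a Hilbert scheme of two points on a K3 surface, not merely deformation equivalent to one.

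Next I would recall from the proof of Theorem \ref{main2} that the associated K3 surfaces $S$ of cubics in $\mathcal{F}$ have Néron-Severi rank $\rho(S) \geq 19$: generically $\rho(S) = 19$, and at the intersection points with further Hassett divisors $\rho(S) = 20$. By Theorem \ref{k3motive}, the Chow motive $h(S)$ is therefore finite-dimensional and lies in $\sM^{Ab}_{rat}(\C)$ for every such $S$. The key remaining input is the result of \cite[Sect.\ 6]{decamiglio}, already quoted in the text, stating that if a K3 surface $S$ has finite-dimensional (respectively Abelian) motive, then its Hilbert scheme $S^{[n]}$ has finite-dimensional (respectively Abelian) motive as well. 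Applying this with $n = 2$ to each $S$ arising from $\mathcal{F}$ yields that $h(S^{[2]})$ is finite-dimensional and Abelian.

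Finally I would conclude by transporting this along the isomorphism $F(X) \cong S^{[2]}$: since an isomorphism of smooth projective varieties induces an isomorphism of their Chow motives, $h(F(X)) \cong h(S^{[2]})$ inherits finite-dimensionality and membership in $\sM^{Ab}_{rat}(\C)$. As this holds for every $X \in \mathcal{F}$, the proposition follows. I do not anticipate a genuine obstacle here, since all the hard work — the existence and lattice-theoretic control of the family $\mathcal{F}$, the finite-dimensionality of motives of K3 surfaces of Picard rank $\geq 19$, and the behaviour of motives under the Hilbert scheme construction — has already been established or cited; the only point requiring mild care is checking that $\mathcal{C}_{14}$ indeed falls in the range $d = 2(n^2+n+1)$ where the isomorphism $F(X) \cong S^{[2]}$ holds, which it does for $n = 2$, so the argument applies uniformly across the whole family.
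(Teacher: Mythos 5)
Your proof is correct and follows essentially the same route as the paper: the isomorphism $F(X)\cong S^{[2]}$ for cubics in $\mathcal{C}_{14}$ (via Hassett, with $d=14=2(n^2+n+1)$ for $n=2$), the finite-dimensionality and Abelianity of $h(S)$ for the associated K3 surfaces with $\rho(S)\geq 19$ from Theorem \ref{k3motive}, and the transfer to $S^{[2]}$ via \cite[Sect.~6]{decamiglio}. The only difference is that you spell out the steps more explicitly than the paper, which compresses the argument into the paragraph preceding the proposition.
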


\begin{rk}
Once again, we can play the same game as before by taking $\mathcal{C}_{\frac{2n^2+2n+2}{a^2}}$, $n,a\in \Z$, instead of $\mathcal{C}_{14}$. By \cite[Thm. 2]{Add16}, having a $d$ of this shape is equivalent to having a birational equivalence between $F(X)$ and the Hilbert square of the associated K3. Since birational Hyperk\"ahler varieties have isomorphic Chow motives, everything runs the same way, and we have a countably infinite set of families of Fano varieties $F(X)$ with finite dimensional and Abelian motive, whose cubic fourfolds all lie in a fixed $\mathcal{C}_d$.
\end{rk}

On the other hand, let us now consider the Hyperk\"ahler 8fold $L(X)$. In order to define properly $L(X)$ we need to assume that $X$ does not contain a plane, i.e. $X\not\in\mathcal{C}_8$. Then, the analogue of Prop. \ref{fano} is the following.

\begin{prop}\label{8fold}
All Hyperk\"ahler 8folds $L(X)$, $X\in \mathcal{F}$, have finitely generated and Abelian Chow motive.
\end{prop}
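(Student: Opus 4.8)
The plan is to reduce the statement about the LLSvS eightfold $L(X)$ to the finite dimensionality and Abelianity results already established for the associated K3 surface and the cubic fourfold itself. Recall that for our family $\mathcal{F}$, every cubic $X$ lies in $\mathcal{C}_{14}$, and since $14$ does not satisfy the numerical constraint $d=8$, the cubics in $\mathcal{F}$ do not contain a plane, so $X\notin\mathcal{C}_8$ and $L(X)$ is well-defined. The eightfold $L(X)$ is an irreducible holomorphic symplectic variety deformation equivalent to $S^{[4]}$, where $S$ is the K3 surface associated to $X$; this is the geometric input we need to connect $L(X)$ to the Hilbert scheme of the associated K3.

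First I would recall from the main theorem's proof that the associated K3 surface $S$ of any $X\in\mathcal{F}$ has N\'eron--Severi rank $\rho(S)\geq 19$, so by Theorem \ref{k3motive} its motive $h(S)$ is finite dimensional and lies in $\sM^{Ab}_{rat}(\C)$. Next I would invoke \cite[Sect. 6]{decamiglio}, already cited in the preceding discussion, to conclude that the Hilbert scheme $S^{[4]}$ inherits finite dimensionality and Abelianity from $h(S)$; this is the exact analogue of the argument used for $S^{[2]}$ in Proposition \ref{fano}. The remaining and central step is to transfer these properties from $S^{[4]}$ to $L(X)$ itself, since, unlike the case of the Fano variety of lines where $F(X)\cong S^{[2]}$ as a genuine isomorphism in $\mathcal{C}_{14}$, there is in general no isomorphism or even a birational map between $L(X)$ and $S^{[4]}$ at our disposal.

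The hard part will be precisely this transfer. For the Fano variety, Proposition \ref{fano} exploited an honest isomorphism $F(X)\cong S^{[2]}$, and birational Hyperk\"ahler varieties have isomorphic Chow motives, so a birational version suffices there. For $L(X)$ the situation is more delicate: the correct tool is the result of Soldatenkov \cite{So}, recalled in the excerpt, stating that among deformation equivalent projective Hyperk\"ahler manifolds the property of having Abelian motive is a deformation invariant at the level of \emph{Andr\'e} motives, combined with the theorem of \cite{FFZ} and the Hilbert-scheme case. I would therefore argue that since $L(X)$ is deformation equivalent to $S^{[4]}$, and $S^{[4]}$ has Abelian Andr\'e motive, the Andr\'e motive of $L(X)$ is Abelian as well. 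To upgrade from the Andr\'e motive to the Chow motive, I would need the finite dimensionality of $h(L(X))$ in the Kimura--O'Sullivan sense, which in practice follows from an explicit relation between the Chow motive of $L(X)$ and that of the cubic $X$ (and hence of $S$).

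To make the Chow-theoretic statement rigorous rather than merely motivic-in-the-Andr\'e-sense, the cleanest route is to use a direct correspondence between $L(X)$ and powers of $X$: work of Shen--Vial and others produces a relation expressing $h(L(X))$ in terms of tensor powers and submotives of $h(X)$ via explicit algebraic cycles, so that finite dimensionality and membership in $\sM^{Ab}_{rat}(\C)$ propagate from $h(X)$ to $h(L(X))$ by the fact that $\sM^{Ab}_{rat}(\C)$ is a thick rigid tensor subcategory closed under direct summands and tensor products. Since $h(X)$ is finite dimensional and Abelian for every $X\in\mathcal{F}$ by Theorem \ref{main2}, this would yield the same conclusion for $h(L(X))$. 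The genuine obstacle is thus verifying that such a Chow-motivic relation holds integrally (or at least with $\Q$-coefficients) in a form usable here; once that relation is in hand, the stability properties of $\sM^{Ab}_{rat}(\C)$ close the argument immediately.
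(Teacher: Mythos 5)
There is a genuine gap, and it sits precisely at the step you yourself flag as ``the hard part.'' You assert that, unlike the Fano case, ``there is in general no isomorphism or even a birational map between $L(X)$ and $S^{[4]}$ at our disposal.'' For the family $\mathcal{F}$ this is false, and the existence of such a birational map is exactly the key input of the paper's proof: by \cite[Thm. 3]{AddGio}, the eightfold $L(X)$ is birational to $S^{[4]}$ if and only if $d=\frac{6n^2+6n+2}{a^2}$ with $n,a\in\Z$, and $d=14$ is the first value on this list (take $n=1$, $a=1$). Since $\mathcal{F}\subset\mathcal{C}_{14}$, every $X\in\mathcal{F}$ satisfies this condition, so $L(X)$ is birational to $S^{[4]}$. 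Combining this with the fact (already used for Proposition \ref{fano}) that birational projective Hyperk\"ahler varieties have isomorphic Chow motives, and with \cite[Sect. 6]{decamiglio} applied to $S^{[4]}$ --- a step you do carry out correctly, starting from $\rho(S)\geq 19$ and Theorem \ref{k3motive} --- the Chow-motivic statement follows at once, in exact parallel with Proposition \ref{fano}, with ``isomorphism'' replaced by ``birational map.''

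Because you deny yourself this birationality, your fallback route cannot close the argument. Soldatenkov's deformation-invariance theorem lives in the category of Andr\'e motives: deformation equivalence of $L(X)$ with $S^{[4]}$ only yields that the \emph{Andr\'e} motive of $L(X)$ is Abelian, and there is no general mechanism to descend from Andr\'e motives to Chow motives without already knowing finite dimensionality --- which is part of what is being proved. Your proposed repair via a Shen--Vial-type correspondence expressing $h(L(X))$ through powers of $h(X)$ is left explicitly unverified (``the genuine obstacle is thus verifying that such a Chow-motivic relation holds''), so the proof as written is incomplete at its central step. A minor further inaccuracy: membership in $\mathcal{C}_{14}$ does not imply $X\notin\mathcal{C}_8$, since Hassett divisors pairwise intersect by \cite{2019arXiv190501936Y}; the honest statement is that one restricts to members of $\mathcal{F}$ not containing a plane (a hypothesis the paper itself imposes when defining $L(X)$), rather than deducing it from $14\neq 8$.
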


\begin{proof}


Let $X\in \mathcal{C}_d$ be a cubic fourfold not containing a plane, such that $S$ is its associated K3 surface. In \cite[Thm. 3]{AddGio} the authors show that the 8fold $L(X)$ is birational to $S^{[4]}$ if and only if

$$(***')\ \ \ d=\frac{6n^2+6n+2}{a^2},\ \ n,a\in\Z.$$

The first integer of the list is once again 14, hence for all the cubic fourfolds of our family $\mathcal{F}$ we have $L(X)\stackrel{birat}{\cong} S^{[4]}$. Since birational Hyperk\"ahler varieties have isomorphic Chow motives, the results from \cite{decamiglio} complete the proof, and we have a one-dimensional family of $L(X)$ with finite dimensional and Abelian motive for all $\mathcal{C}_d$.
\end{proof}

\begin{rk}
If, instead of taking $\mathcal{C}_{14}$ in the proof of Prop \ref{fano} and \ref{8fold}, we take $d=182$, we observe that this verifies both condition $(***)$ (with $f=1$) and $(***')$. Then the intersection

\begin{equation}\label{both}
\mathcal{F}:=\mathcal{C}_d\cap \mathcal{C}_{182}\cap (\bigcap_{k=1}^{17})\mathcal{C}_{d_k}  \neq \emptyset.
\end{equation}

defines a one dimensional family, inside any $\mathcal{C}_d$ of cubic fourfolds s.t. the corresponding Fano varieties and LLSvS 8folds have finite dimensional and Abelian motive, too.
\end{rk}

\bibliography{bib_tocho}

\begin{thebibliography}{10}

\bibitem{Add16}
N.~Addington.
\newblock On two rationality conjectures for cubic fourfolds.
\newblock {\em Math. Res. Lett.}, 23(1):1--13, 2016.

\bibitem{MR3238111}
A.~Auel, M.~Bernardara, M.~Bolognesi, and A.~V\'{a}rilly-Alvarado.
\newblock Cubic fourfolds containing a plane and a quintic del {P}ezzo surface.
\newblock {\em Algebr. Geom.}, 1(2):181--193, 2014.

\bibitem{awada2019rational}
H.~Awada.
\newblock Rational fibered cubic fourfolds with nontrivial brauer classes,
  2019.

\bibitem{awada2020rational}
H.~Awada.
\newblock Rational cubic fourfolds with associated singular k3 surfaces, 2020.

\bibitem{ccs}
W.~P. Barth, K.~Hulek, C.~A.~M. Peters, and A.~Van~de Ven.
\newblock {\em Compact complex surfaces}, volume~4 of {\em Ergebnisse der
  Mathematik und ihrer Grenzgebiete. 3. Folge. A Series of Modern Surveys in
  Mathematics [Results in Mathematics and Related Areas. 3rd Series. A Series
  of Modern Surveys in Mathematics]}.
\newblock Springer-Verlag, Berlin, second edition, 2004.

\bibitem{Beau}
A.~Beauville and R.~Donagi.
\newblock La vari\'{e}t\'{e} des droites d'une hypersurface cubique de
  dimension {$4$}.
\newblock {\em C. R. Acad. Sci. Paris S\'{e}r. I Math.}, 301(14):703--706,
  1985.

\bibitem{BP20}
M.~Bolognesi and C.~Pedrini.
\newblock The transcendental motive of a cubic fourfold.
\newblock {\em J. Pure Appl. Algebra}, 224(8), 2020.

\bibitem{MR3968870}
M.~Bolognesi, F.~Russo, and G.~Staglian\`o.
\newblock Some loci of rational cubic fourfolds.
\newblock {\em Math. Ann.}, 373(1-2):165--190, 2019.

\bibitem{Bu}
T.-H. B\"{u}lles.
\newblock Motives of moduli spaces on {K}3 surfaces and of special cubic
  fourfolds.
\newblock {\em Manuscripta Math.}, 161(1-2):109--124, 2020.

\bibitem{decamiglio}
M.~A.~A. de~Cataldo and L.~Migliorini.
\newblock The {C}how groups and the motive of the {H}ilbert scheme of points on
  a surface.
\newblock {\em J. Algebra}, 251(2):824--848, 2002.

\bibitem{Diaz}
H.~Diaz.
\newblock The motive of the {F}ano surface of lines.
\newblock {\em C. R. Math. Acad. Sci. Paris}, 354(9):925--930, 2016.

\bibitem{GG}
S.~Gorchinskiy and V.~Guletski\u{\i}.
\newblock Motives and representability of algebraic cycles on threefolds over a
  field.
\newblock {\em J. Algebraic Geom.}, 21(2):347--373, 2012.

\bibitem{Ha2}
B.~Hassett.
\newblock Some rational cubic fourfolds.
\newblock {\em J. Algebraic Geom.}, 8(1):103--114, 1999.

\bibitem{Ha1}
B.~Hassett.
\newblock Special cubic fourfolds.
\newblock {\em Compositio Mathematica}, 120:1--23, 01 2000.

\bibitem{Ha3}
B.~Hassett.
\newblock {\em Cubic Fourfolds, K3 Surfaces, and Rationality Questions}, volume
  2172, pages 29--66.
\newblock 01 2016.

\bibitem{Kuznetsov_2009}
A.~Kuznetsov.
\newblock Derived categories of cubic fourfolds.
\newblock {\em Cohomological and Geometric Approaches to Rationality Problems},
  page 219–243, Oct 2009.

\bibitem{lat:aremarkfano}
R.~Laterveer.
\newblock A remark on the motive of the {F}ano variety of lines of a cubic.
\newblock {\em Ann. Math. Qu\'{e}.}, 41(1):141--154, 2017.

\bibitem{lat:afamily}
R.~Laterveer.
\newblock A family of cubic fourfolds with finite-dimensional motive.
\newblock {\em J. Math. Soc. Japan}, 70(4):1453--1473, 2018.

\bibitem{lat:somecub}
R.~Laterveer.
\newblock Some cubics with finite-dimensional motive.
\newblock {\em Bol. Soc. Mat. Mex. (3)}, 24(2):319--327, 2018.

\bibitem{LLSvS}
C.~Lehn, M.~Lehn, C.~Sorger, and D.~van Straten.
\newblock Twisted cubics on cubic fourfolds.
\newblock {\em J. Reine Angew. Math.}, 731:87--128, 2017.

\bibitem{1984InMat..75..105M}
D.~R. {Morrison}.
\newblock {On K3 surfaces with large Picard number.}
\newblock {\em Inventiones Mathematicae}, 75:105, Jan. 1984.

\bibitem{AddGio}
F.~N.Addington.
\newblock On the period of lehn, lehn, sorger, and van straten’s symplectic
  eightfold.
\newblock {\em https://arxiv.org/pdf/2003.10984.pdf}, 2020.

\bibitem{Ped}
C.~Pedrini.
\newblock On the finite dimensionality of a {K}3 surface.
\newblock {\em Manuscripta Math.}, 138(1-2):59--72, 2012.

\bibitem{articlesarti}
A.~Sarti.
\newblock Transcendental lattices of some k 3‐surfaces.
\newblock {\em Mathematische Nachrichten}, 281:1031 -- 1046, 07 2008.

\bibitem{FFZ}
Z.~S.Fioccari, L.~Fu.
\newblock On the motive of o'grady's ten dimensional hyperk\"alher varieties.
\newblock {\em arXiv:1911.06572}, 2019.

\bibitem{shioda_inose_1977}
T.~Shioda and H.~Inose.
\newblock {\em On Singular K3 Surfaces}, page 119–136.
\newblock Cambridge University Press, 1977.

\bibitem{Voi}
C.~Voisin.
\newblock Some aspects of the hodge conjecture.
\newblock {\em Japanese Journal of Mathematics}, 2:261--296, 09 2007.

\bibitem{2019arXiv190501936Y}
S.~{Yang} and X.~{Yu}.
\newblock {Rational cubic fourfolds in Hassett divisors}.
\newblock {\em arXiv e-prints}, page arXiv:1905.01936, May 2019.

\end{thebibliography}
\bibliographystyle{abbrv}

\end{document}